\theoremstyle{definition}
\newtheorem{theorem}{Theorem}[section]
\newtheorem{lemma}[theorem]{Lemma}
\newtheorem{definition}[theorem]{Definition}
\newtheorem{example}[theorem]{Example}
\newtheorem{proposition}[theorem]{Proposition}
\newtheorem{corollary}[theorem]{Corollary}
\theoremstyle{remark}
\numberwithin{equation}{section}
\def\EE{{\mathcal{E}}}
\def\FF{{\mathcal{F}}}
\def\SSS{{\mathbf{S}}}
\def\RRR{{\mathbf{R}}}
\def\GGG{\mathbf{G}}
\def\ss{\mathtt{s}}
\begin{document}

\title[Regular Dirichlet subspaces and Mosco Convergence]{Regular Dirichlet subspaces and Mosco Convergence}

\author{ Xiucui Song}
\address{School of Mathematical Sciences, Fudan University, Shanghai 200433, China.}
\email{xiucuisong12@fudan.edu.cn}

\author{Liping Li}
\address{School of Mathematical Sciences, Fudan University, Shanghai 200433, China.}
\email{lipingli10@fudan.edu.cn}

\subjclass[2000]{31C25,~60F05.}



\keywords{Dirichlet forms, regular suspaces, Mosco convergence, minimal diffusion.}

\begin{abstract}
 In this paper, we shall explore the Mosco convergence on regular subspaces of one-dimensional irreducible and strongly local Dirichlet forms. We find that if the characteristic sets of regular subspaces are convergent, then their associated regular subspaces are convergent in sense of Mosco. Finally, we shall show some examples to illustrate that the Mosco convergence does not preserve any global properties of the Dirichlet forms.
\end{abstract}

\maketitle

\section{Introduction}\label{SEC1}

What we are concerned in this paper is the theory of Dirichlet forms. The Dirichlet form was first raised by A. Beurling and J. Deny \cite{BD59} in 1959. Then M. Fukushima proved that the regular Dirichlet forms always possess associated symmetric Hunt processes in his excellent historical works (e.g. \cite{F71} \cite{F71-2}) at the beginning of 1970s. This sets up an exact connection between analysis and probability. On the other hand, M. Fukushima and J. Ying introduced a new conception, named by ``regular Dirichlet subpsace'', in 2003, see \cite{FY03} \cite{FY04}. Roughly speaking, for a given Dirichlet form, a regular Dirichlet subspace is its closed subspace with Dirichlet and regular properties. In 2005 and 2010, M. Fukushima, J. Ying and their co-authors characterized the regular Dirichlet subspaces of 1-dim Brownian motions and 1-dim irreducible diffusions by using a special class of scaling functions, see \cite{FFY05} and \cite{FHY10}. Furthermore, the second author of this paper, with J. Ying together, made more deep descriptions about the regular Dirichlet subspaces, such as \cite{LY14} \cite{LY15}. 

To introduce the conception of regular Dirichlet subspace, we need to explain the basic settings of Dirichlet forms briefly. Let $E$ be a measurable space and $m$ a $\sigma$-finite measure on $E$. Naturally, $L^2(E,m)$ is a real Hilbert space, whose norm and inner product are denoted by $\|\cdot\|_m$ and $(\cdot, \cdot)_m$. A Dirichlet form on $L^2(E,m)$ is usually written as $(\EE,\FF)$. Its definition is standard, see \cite{CF12} and \cite{FOT11}. In particular, if $E$ is a locally compact separable metric space and $m$ is a fully supported Radon measure on $E$, then we may talk about the regularity of Dirichlet forms. Indeed, a Dirichlet form $(\EE,\FF)$ on $L^2(E,m)$ is said to be regular, if $\FF\cap C_c(E)$ is dense in $\FF$ with the norm $\|\cdot\|_{\EE_1}$ and dense in $C_c(E)$ with the uniform norm, where the norm $\|\cdot\|_{\EE_1}$ corresponds to the inner product
\[
	\EE_1(u,v):=\EE(u,v)+(u,v)_m,
\]
and $C_c(E)$ is the class of continuous functions with compact supports on $E$. Moreover, the Borel measurable structure on $E$ is denoted by $\mathcal{B}(E)$. Without loss of generality, we use $f\in \mathcal{B}(E)$ to represent that the function $f$ is Borel measurable. Thus $\mathcal{B}(E)$ is formally the class of all Borel measurable functions on $E$. Furthermore, let $\mathcal{B}_+(E)$ and $b\mathcal{B}(E)$ be all positive Borel measurable functions and bounded Borel measurable functions on $E$ respectively. On the other hand, $C(E)$ is the class of all continuous functions on $E$, and $C_b(E), C_0(E)$ are its subspaces of bounded functions and being $0$ at infinity. In particular, $C^1(\RRR)$, $C^\infty(\RRR)$ are usual notations. 
We refer more terminologies of Dirichlet forms and potential theory to \cite{FOT11}.

Let $(\EE,\FF)$ and $(\EE',\FF')$ be two regular Dirichlet forms on $L^2(E,m)$. We say $(\EE',\FF')$ is a regular Dirichlet subspace, or a regular subspace in abbreviation, of $(\EE, \FF)$ provided that 
\[
	\FF'\subset \FF,\quad \EE(u,v)=\EE'(u,v),\quad u,v\in \FF'. 
\]
Furthermore, if $\FF'\neq \FF$, then $(\EE',\FF')$ is a proper regular subspace of $(\EE,\FF)$. In particular, we use
\[
	(\EE',\FF')\prec (\EE,\FF)
\]
to stand for that $(\EE',\FF')$ is a regular subspace of $(\EE,\FF)$. 

Our another focus in this paper is the Mosco convergence of Dirichlet forms. This kind of convergence was first introduced by U. Mosco \cite{M94} in 1994 and then widely used by lots of researchers, for instance \cite{BBCK09} \cite{K06} \cite{SU14}. In particular, it was also employed in \cite{BR14} to study the stochastic averaging principle of Halmiton dynamical system. 

Next, we shall briefly introduce the basic definition and some probabilistic significances of Mosco convergence. For any Dirichlet form $(\EE,\FF)$ on $L^2(E,m)$, we always extend the domain of $\EE$ to $L^2(E,m)$ by 
\[
	\EE(u,u)=\infty, \quad u\in L^2(E,m)\setminus \FF. 
\]
The following definition is given by U. Mosco \cite{M94}, in which that $u_n$ converges to $u$ weakly in $L^2(E,m)$ means that for any $v\in L^2(E,m)$, $(u_n,v)_m\rightarrow (u,v)_m$ as $n\rightarrow \infty$, and strong convergence means $\|u_n-u\|_m\rightarrow 0$ as $n\rightarrow \infty$. 

\begin{definition}\label{DEF141}
Let $\{(\EE^n,\FF^n):n\geq 1\}$ be a sequence of Dirichlet forms and $(\EE, \FF)$ another Dirichlet form  on $L^2(E,m)$. Then $(\EE^n, \FF^n)$ is said to be convergent to $(\EE, \FF)$ in sense of Mosco as $n\rightarrow \infty$, if
	\begin{description}
	 \item[(a)] for any sequence $\{u_n: n\geq 1\}$ of functions in $L^2(E,m)$, which is convergent to another function $u\in L^2(E,m)$ weakly, it holds that
	 \begin{equation}\label{141a}
	  	\liminf_{n\rightarrow \infty} \EE^n(u_n,u_n)\geq \EE(u,u);
	 \end{equation}
	 \item[(b)] for any function $u\in L^2(E,m)$, there always exists a sequence $\{u_n:n\geq 1\}$ of functions in $L^2(E,m)$, which is convergent to $u$ strongly as $n\rightarrow \infty$, such that
	 \begin{equation}\label{141b}
	  	\limsup_{n\rightarrow \infty} \EE^n(u_n,u_n)\leq \EE(u,u). 
	 \end{equation}
	\end{description}
\end{definition}

The most important significance of Mosco convergence is that it is equivalent to the convergence of associated semigroups. More precisely, let $\{T_t^n:t\geq 0\}$ and $\{G^n_\alpha:\alpha>0\}$ be the semigroup and resolvent of $(\EE^n,\FF^n)$, $\{T_t:t\geq 0\}$ and $\{G_\alpha: \alpha>0\}$ the semigroup and resolvent of $(\EE, \FF)$. Then $(\EE^n,\FF^n)$ is convergent to $(\EE,\FF)$ in sense of Mosco as $n\rightarrow \infty$, if and only if any one of following assertions holds: 
\begin{description}
 \item[(1)] for any $t>0,~ f\in L^2(E,m)$, $T_t^n f$ is convergent to $T_tf$ strongly in $L^2(E,m)$ as $n\rightarrow \infty$;
 \item[(2)] for any $\alpha>0,~ f\in L^2(E,m)$, $G_\alpha^n f$ is convergent to $G_\alpha f$ strongly in $L^2(E,m)$ as $n\rightarrow \infty$. 
\end{description}
Note that the semigoup of Dirichlet form is decided by the probability transition semigroup of associated Markov process. Hence the Mosco convergence implies the weak convergence of finite dimensional distributions of associated Markov processes. This fact is one of the reasons why the Mosco convergence is very useful in the theory of stochastic differential equations. 

At the end of this section, let us explain the structure of this paper. In \S\ref{SEC2}, we shall describe the associated Dirichlet forms of irreducible diffusions on 1-dimensional state space and characterize their regular subspaces. Particularly, we shall improve the results of \cite{FHY10} and give another description, say the characteristic sets, of regular subspaces. In \S\ref{SEC3} and \S\ref{SEC4}, we shall provide two conditions on characteristic sets to make the regular subspaces be Mosco convergent. Finally in \S\ref{SEC5}, we shall show some examples to claim that the Mosco convergence cannot maintain the stability of global properties of Dirichlet forms. The two convergence methods employed in \S\ref{SEC4} will be used.

\section{The regular subspaces of 1-dim irreducible diffusions and their charcteristic sets}\label{SEC2}

We always assume that $E$ is $\RRR$ or an open interval of $\RRR$, which is denoted by $I$. In other words, 
\[
	E=I=(a,b),
\]
where $-\infty \leq a<b\leq \infty$. 
The continuous stochastic process $X$ with strong Markov property on $I$ is also called a diffusion process. Further assume that
\begin{equation}\label{EQ31PXY}
	\mathbf{P}^x(\sigma_y<\infty)>0,\quad \forall x,y\in I,
\end{equation}
where $\sigma_y$ is the hitting time of $\{y\}$. 
This assumption, usually named by irreducibility, means that any two points of $I$ are connected for $X$ in intuition. Under this condition, the diffusion $X$ can be characterized completely by a strictly increasing and continuous function $\ss$, which is called the scaling function, and two Radon measures $m,k$ on $I$. In particular, $m$ is fully supported on $I$, and $X$ is $m$-symmetric. Furthermore, $k$ is the so-called killing measure of $X$, and we may assume that $k=0$ because of the studies in \cite{LY15}. Note that for any two constants $C, C_0$, if we replace $\ss, m$ by $C\cdot \ss+C_0, m/C$ respectively, then they still describe the same diffusion. We refer more details to \cite{IM74} and \cite{RW00}. 

\begin{definition}
The boundary point $a$ (resp. $b$) of $I$ is called $\ss$-approachable, if $\mathtt{s}(a+):=\lim_{x\downarrow a} \mathtt{s}(x)>-\infty$ \big(resp. $\mathtt{s}(b-):=\lim_{x\uparrow b} \mathtt{s}(x)<\infty$\big). Furthermore, $a$ (resp. $b$) is called an $\ss$-regular boundary, if $a$ is $\ss$-approachable and there is a constant $c\in I$ such that $m\big((a, c)\big)<\infty$ \big(resp. $b$ is $\ss$-approachable, and there is a constant $c\in I$ such that $m\big((c, b)\big)<\infty$\big).
\end{definition}

We always assume that the boundary $\{a,b\}$ of $I$ is the trap of $X$. That means if $X$ approaches the boundary, then it dies. Define
\begin{equation}\label{EQ2FSMU}
	\FF^{(\mathtt{s},m)}:=\bigg\{u\in L^2(I,m): u\ll \mathtt{s}, \frac{du}{d\mathtt{s}}\in L^2(I, d\mathtt{s})\bigg\},
\end{equation}
where $u\ll \mathtt{s}$ stands for that $u$ is absolutely continuous with respect to $\mathtt{s}$, or in other words, there is an absolutely continuous function $\varphi$ such that $u=\varphi\circ \mathtt{s}$. 
For any $u,v\in \FF^{(\mathtt{s},m)}$, set 
\[
	\EE^{(\mathtt{s},m)}(u,v):=\frac{1}{2}\int_I \frac{du}{d\mathtt{s}}\frac{dv}{d\mathtt{s}}d\mathtt{s}. 
\]
Note that $(\EE^{(\ss,m)},\FF^{(\ss,m)})$ is a Dirichlet form on $L^2(I,m)$ but not necessarily a regular one. One may prove that the associated Dirichlet form of diffusion $X$ on $L^2(I,m)$ can be written as
\begin{equation}\label{EQ31FSM}
	\begin{aligned}
		&\FF^{(\mathtt{s},m)}_0:=\{u\in \FF^{(\mathtt{s},m)}:  u(a)~\text{or}~u(b)=0, \text{if}~a~\text{or}~b~\text{is}~\ss\text{-regular}\}, \\
		&\EE^{(\mathtt{s},m)}(u,v)=\frac{1}{2}\int_I \frac{du}{d\mathtt{s}}\frac{dv}{d\mathtt{s}}d\mathtt{s}, \quad u,v\in \FF^{(\mathtt{s},m)}_0.
	\end{aligned}
\end{equation}
Moreover, $(\EE^{(\mathtt{s},m)}, \FF^{(\mathtt{s},m)}_0)$ is regular and irreducible with a special standard core
\[
	C_c^\infty\circ \mathtt{s}:=\{\varphi\circ \mathtt{s}: \varphi\in C_c^\infty(J)\},
\] 
where $J:=\mathtt{s}(I)=\{\mathtt{s}(x):x\in I\}$ is also an open interval of $\mathbf{R}$. Here, the irreducibility concerns Dirichlet forms (see \S1.6 of \cite{FOT11}), which differs to \eqref{EQ31PXY}. However, in this situation, the irreducibility of Dirichlet forms is equivalent to that of diffusion processes. The associated diffusion process of \eqref{EQ31FSM} is also called an absorbing diffusion or minimal diffusion.  

X. Fang, P. He and J. Ying in \cite{FHY10} first made a discussion about the regular subspaces of $(\EE^{(\mathtt{s},m)},\FF^{(\mathtt{s},m)}_0)$ and their global properties. But unfortunately, they did not assert that all regular subspaces of $(\EE^{(\mathtt{s},m)},\FF^{(\mathtt{s},m)}_0)$ can be described by the scaling functions, which were provided in \cite{FHY10}. Next, we shall give a brief proof to cover the above shortage. For that, take a fixed point $e$ on $I$, and set
\begin{equation}\label{EQ32SSI}
\begin{aligned}
	\mathbf{S}_\mathtt{s}(I):=\bigg\{\tilde{\mathtt{s}}: \tilde{\mathtt{s}}~&\text{is a strictly increasing and continuous function on}~I,\\& \tilde{\mathtt{s}}(e)=0,~\tilde{\mathtt{s}}\ll \mathtt{s},~\frac{d\tilde{\mathtt{s}}}{d\mathtt{s}}=0~\text{or}~1,~d\mathtt{s}\text{-a.e.}\bigg\}.
	\end{aligned}
\end{equation}
Note that the choice of $e$ is not essential. Since the scaling functions $\ss$ and $\ss+C$ describe the same diffusion process for any constant $C$, thus we fix the value of scaling function at a fixed point to avoid the presence of equivalence class.

\begin{proposition}\label{PRO1}
	For any $\tilde{\mathtt{s}}\in \mathbf{S}_\mathtt{s}(I)$, it holds that
\[
	(\EE^{(\tilde{\mathtt{s}},m)},\FF^{(\tilde{\mathtt{s}},m)}_0)\prec (\EE^{(\mathtt{s},m)},\FF^{(\mathtt{s},m)}_0).
\]
On the contrary, if $(\EE',\FF')\prec \big(\EE^{(\mathtt{s},m)},\FF^{(\mathtt{s},m)}_0\big)$, then there is a scaling function $\tilde{\mathtt{s}}\in \mathbf{S}_\mathtt{s}(I)$ such that
\[
	(\EE',\FF')=(\EE^{(\tilde{\mathtt{s}},m)},\FF^{(\tilde{\mathtt{s}},m)}_0).
\]
In particular, the regular subspace $(\EE^{(\tilde{\mathtt{s}},m)},\FF^{(\tilde{\mathtt{s}},m)}_0)$ is a proper one, if and only if $\tilde{\mathtt{s}}\neq \mathtt{s}$.  
\end{proposition}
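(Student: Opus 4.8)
My plan is to prove the two inclusions separately and then read off the properness criterion. Throughout I set $A:=\{x\in I:\frac{d\tilde{\ss}}{d\ss}(x)=1\}$, so that $d\tilde{\ss}=\mathbf{1}_A\,d\ss$ as Lebesgue--Stieltjes measures whenever $\tilde{\ss}\in\mathbf{S}_{\ss}(I)$.

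For the forward direction, fix $\tilde{\ss}\in\mathbf{S}_{\ss}(I)$. The pair $(\EE^{(\tilde{\ss},m)},\FF^{(\tilde{\ss},m)}_0)$ is itself a regular minimal-diffusion form by the construction recalled around \eqref{EQ31FSM}, now applied to the strictly increasing continuous function $\tilde{\ss}$. To see it sits inside the big form I would take $u\in\FF^{(\tilde{\ss},m)}$ and use $u\ll\tilde{\ss}$ to write $du=\frac{du}{d\tilde{\ss}}\,d\tilde{\ss}=\big(\frac{du}{d\tilde{\ss}}\mathbf{1}_A\big)\,d\ss$; hence $u\ll\ss$ with $\frac{du}{d\ss}=\frac{du}{d\tilde{\ss}}\mathbf{1}_A$, and since $d\ss$ and $d\tilde{\ss}$ agree on $A$ while $d\tilde{\ss}$ vanishes off $A$, both $u\in\FF^{(\ss,m)}$ and $\EE^{(\ss,m)}(u,u)=\EE^{(\tilde{\ss},m)}(u,u)$ follow at once. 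To match boundary conditions I would note $\tilde{\ss}(y)-\tilde{\ss}(x)=\int_x^y\mathbf{1}_A\,d\ss\le\ss(y)-\ss(x)$, so $\ss$-approachability implies $\tilde{\ss}$-approachability and every $\ss$-regular boundary is $\tilde{\ss}$-regular; thus the vanishing conditions cutting out $\FF^{(\tilde{\ss},m)}_0$ are at least as restrictive as those cutting out $\FF^{(\ss,m)}_0$, giving $\FF^{(\tilde{\ss},m)}_0\subset\FF^{(\ss,m)}_0$ and the desired subspace relation.

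For the converse, let $(\EE',\FF')\prec(\EE^{(\ss,m)},\FF^{(\ss,m)}_0)$. First I would observe that $(\EE',\FF')$ is a regular Dirichlet form on $L^2(I,m)$ inheriting strong locality, because $\EE'(u,v)=\EE^{(\ss,m)}(u,v)=0$ for $u,v\in\FF'$ in the usual disjoint-support configuration; hence its associated Hunt process is a diffusion on $I$. Appealing to the one-dimensional theory of $m$-symmetric diffusions (\cite{IM74},\cite{RW00}) together with \eqref{EQ31FSM}, this diffusion is described by a continuous nondecreasing scaling function $\tilde{\ss}$ (normalised by $\tilde{\ss}(e)=0$) and speed measure $m$, and regularity of $(\EE',\FF')$ rules out any interval on which $\tilde{\ss}$ is constant --- there every element of $\FF'$ would be constant, contradicting density of $\FF'\cap C_c(I)$ in $C_c(I)$ --- so $\tilde{\ss}$ is strictly increasing and $(\EE',\FF')=(\EE^{(\tilde{\ss},m)},\FF^{(\tilde{\ss},m)}_0)$. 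I expect this identification of an abstract regular subspace with a concrete minimal-diffusion form to be the main obstacle, since it is exactly the completeness statement left unproved in \cite{FHY10} and it leans on the representation theorem for one-dimensional strongly local forms.

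It then remains to show $\tilde{\ss}\in\mathbf{S}_{\ss}(I)$ and to settle properness. Testing $\FF^{(\tilde{\ss},m)}_0=\FF'\subset\FF^{(\ss,m)}$ against $u=\varphi\circ\tilde{\ss}$ with $\varphi\in C_c^\infty$ locally linear shows $\tilde{\ss}\ll\ss$; writing $g:=\frac{d\tilde{\ss}}{d\ss}\ge0$ and $\frac{du}{d\ss}=g\,\frac{du}{d\tilde{\ss}}$, the energy identity $\EE^{(\tilde{\ss},m)}(u,u)=\EE'(u,u)=\EE^{(\ss,m)}(u,u)$ reads
\[
\int_I\Big(\tfrac{du}{d\tilde{\ss}}\Big)^2 g\,d\ss=\int_I\Big(\tfrac{du}{d\tilde{\ss}}\Big)^2 g^2\,d\ss .
\]
Because $\tilde{\ss}$ maps $I$ homeomorphically onto an interval, $\frac{du}{d\tilde{\ss}}=\varphi'\circ\tilde{\ss}$ can be chosen to approximate the indicator of an arbitrary subinterval of $I$, forcing $g(1-g)=0$ $d\ss$-a.e., i.e. $g\in\{0,1\}$, so $\tilde{\ss}\in\mathbf{S}_{\ss}(I)$. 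Finally, if $\tilde{\ss}=\ss$ the two forms coincide; and if $\tilde{\ss}\neq\ss$ then $\ss(I\setminus A)>0$, so I would pick a compactly supported $u$ whose density $\frac{du}{d\ss}$ is a balanced combination of indicators of $I\setminus A$: then $u\in\FF^{(\ss,m)}_0$ while $du$ is carried by $I\setminus A$ and hence singular to $d\tilde{\ss}$, so $u\notin\FF^{(\tilde{\ss},m)}$ and the subspace is proper.
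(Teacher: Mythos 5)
Your forward direction and your properness construction are correct: they re-derive by direct computation what the paper simply quotes from Theorem~4.1 of \cite{FHY10}, and your observation that strong locality of $(\EE',\FF')$ is inherited verbatim from the parent form (for $u,v\in\FF'$ in the strong-locality configuration one has $\EE'(u,v)=\EE^{(\ss,m)}(u,v)=0$) is a legitimate, even simpler, substitute for the paper's citation of Theorem~1 of \cite{LY15}. The genuine gap sits in the converse, at exactly the point you yourself flagged as ``the main obstacle'': you invoke the one-dimensional theory of $m$-symmetric diffusions to assert that the Hunt process of $(\EE',\FF')$ is described by a single scaling function $\tilde{\ss}$ and speed measure $m$, with Dirichlet form of the shape \eqref{EQ31FSM}. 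That representation is available only for \emph{irreducible} diffusions (the standing assumption \eqref{EQ31PXY}), and you never prove that $(\EE',\FF')$ is irreducible. This is not a formality: regularity plus strong locality alone do not imply irreducibility --- the zero form $\EE\equiv 0$ on $\FF=L^2(I,m)$ is regular and strongly local, yet every measurable set is invariant and it admits no scale-function representation --- so an argument that uses only ``regular, strongly local, continuous paths'' cannot yield the representation you need. Moreover, your parenthetical fix (``regularity rules out any interval on which $\tilde{\ss}$ is constant'') addresses the wrong degeneracy: the failure mode of reducibility is not a locally constant scale function but the nonexistence of any global scale function, because the state space splits into invariant pieces (for instance across an absorbing interior point), and ruling that out is precisely the content of the irreducibility step.

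This is exactly the issue the paper isolates: after quoting Theorem~4.1 of \cite{FHY10} it notes that ``we only need to prove that $(\EE',\FF')$ is strongly local and irreducible,'' and it obtains irreducibility not from properties of $(\EE',\FF')$ alone but from the irreducibility and strong locality of the parent form $\big(\EE^{(\ss,m)},\FF^{(\ss,m)}_0\big)$, via Theorem~4.6.4 of \cite{FOT11} and the definition of regular subspace. Some argument of this kind --- transferring invariant sets of the subspace to the parent, or otherwise exploiting that the parent is irreducible --- must be inserted before your appeal to the scale-function representation is justified. Once that step is supplied, the remainder of your converse (the test functions $\varphi\circ\tilde{\ss}$ giving $\tilde{\ss}\ll\ss$, the energy identity forcing $g(1-g)=0$ $d\ss$-a.e., hence $\tilde{\ss}\in\SSS_\ss(I)$, and the properness dichotomy) goes through as sketched.
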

\begin{proof}
The first and third assertions are the results of \cite{FHY10}, see Theorem~4.1 of \cite{FHY10}. 
	Now let $(\EE',\FF')$ be a regular subspace of $\big(\EE^{(\mathtt{s},m)},\FF^{(\mathtt{s},m)}_0\big)$. It follows from Theorem~4.1 of \cite{FHY10} that we only need to prove that $(\EE',\FF')$ is strongly local and irreducible. In fact, the strongly local property of $(\EE',\FF')$ is a corollary of Theorem~1 of \cite{LY15}. On the other hand, since $\big(\EE^{(\mathtt{s},m)},\FF^{(\mathtt{s},m)}_0\big)$ is irreducible and strongly local, from Theorem~4.6.4 of \cite{FOT11} and the definition of regular subspace, we can easily deduce that $(\EE',\FF')$ is also irreducible.
\end{proof}

In other words, the proposition above claims that the scaling function class $\SSS_\mathtt{s}(I)$ characterizes all regular subspaces of $\big(\EE^{(\mathtt{s},m)},\FF^{(\mathtt{s},m)}_0\big)$. Now we shall turn to introduce another equivalent description of $\SSS_\mathtt{s}(I)$. Set
\[
	\GGG_\mathtt{s}(I):=\bigg\{ G\subset I: \int_{G\cap (c,d)}d\mathtt{s}>0,~\forall c,d\in I, c<d\bigg\}. 
\]
Obviously, any set $G$ in $\GGG_\mathtt{s}(I)$ is defined in sense of $d\mathtt{s}$-a.e., in other words, it should be regarded as a $d\mathtt{s}$-a.e. equivalence class. The following lemma asserts that  $\GGG_\mathtt{s}(I)$ has an identical status with $\SSS_\mathtt{s}(I)$ for regular subspaces of $\big(\EE^{(\mathtt{s},m)},\FF^{(\mathtt{s},m)}_0\big)$. 

\begin{lemma}\label{LM1}
	There exists a bijective mapping between the scaling function class $\SSS_\mathtt{s}(I)$ and the class  $\GGG_\mathtt{s}(I)$ of sets. 
\end{lemma}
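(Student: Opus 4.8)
The plan is to exhibit the bijection explicitly, sending a scaling function to the $d\mathtt{s}$-equivalence class of the set on which its Radon--Nikodym derivative equals $1$, and sending a set to the scaling function obtained by integrating its indicator against $d\mathtt{s}$. Concretely, for $\tilde{\mathtt{s}}\in \SSS_\mathtt{s}(I)$ I would set
\[
	\Phi(\tilde{\mathtt{s}}):=\bigg\{x\in I:\frac{d\tilde{\mathtt{s}}}{d\mathtt{s}}(x)=1\bigg\},
\]
which is a legitimate $d\mathtt{s}$-equivalence class because $\frac{d\tilde{\mathtt{s}}}{d\mathtt{s}}$ is $d\mathtt{s}$-measurable and takes only the values $0$ and $1$ a.e., by the definition of $\SSS_\mathtt{s}(I)$. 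Conversely, for $G\in\GGG_\mathtt{s}(I)$ I would define
\[
	\Psi(G)(x):=\int_e^x \mathbf{1}_G\, d\mathtt{s},\quad x\in I,
\]
with the usual sign convention for $x<e$.

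First I would check that $\Phi$ lands in $\GGG_\mathtt{s}(I)$. Writing $G=\Phi(\tilde{\mathtt{s}})$, for any $c<d$ in $I$ the absolute continuity gives $\tilde{\mathtt{s}}(d)-\tilde{\mathtt{s}}(c)=\int_c^d\frac{d\tilde{\mathtt{s}}}{d\mathtt{s}}\,d\mathtt{s}=\int_{G\cap(c,d)}d\mathtt{s}$, and since $\tilde{\mathtt{s}}$ is strictly increasing the left-hand side is strictly positive; hence $G$ satisfies the defining inequality of $\GGG_\mathtt{s}(I)$. Next I would check that $\Psi$ lands in $\SSS_\mathtt{s}(I)$: the normalization $\Psi(G)(e)=0$ is immediate; continuity follows because $\mathtt{s}$ is continuous, so the Lebesgue--Stieltjes measure $d\mathtt{s}$ is atomless; absolute continuity $\Psi(G)\ll\mathtt{s}$ with derivative $\mathbf{1}_G\in\{0,1\}$ holds by construction; and strict monotonicity is exactly the statement that $\int_{G\cap(c,d)}d\mathtt{s}>0$ for all $c<d$, i.e. the membership of $G$ in $\GGG_\mathtt{s}(I)$.

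Finally I would verify that $\Phi$ and $\Psi$ are mutually inverse. For $\Psi\circ\Phi$, starting from $\tilde{\mathtt{s}}$ with $G=\Phi(\tilde{\mathtt{s}})$ one gets $\Psi(G)(x)=\int_e^x\mathbf{1}_G\,d\mathtt{s}=\int_e^x\frac{d\tilde{\mathtt{s}}}{d\mathtt{s}}\,d\mathtt{s}=\tilde{\mathtt{s}}(x)-\tilde{\mathtt{s}}(e)=\tilde{\mathtt{s}}(x)$, where the last equality uses the normalization at $e$. For $\Phi\circ\Psi$, the derivative of $\Psi(G)$ is $\mathbf{1}_G$ $d\mathtt{s}$-a.e., so the set where it equals $1$ is $G$ up to a $d\mathtt{s}$-null set, which is precisely equality in $\GGG_\mathtt{s}(I)$.

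The point demanding the most care, rather than a genuine obstacle, is the bookkeeping modulo $d\mathtt{s}$-null sets together with the equivalence between the strict monotonicity of $\tilde{\mathtt{s}}$ and the positivity condition defining $\GGG_\mathtt{s}(I)$. One must ensure that the Radon--Nikodym derivative is genuinely well defined as a $d\mathtt{s}$-class, that the normalization $\tilde{\mathtt{s}}(e)=0$ removes the additive ambiguity so that $\Phi$ is injective, and that the indicator-valued derivative condition in $\SSS_\mathtt{s}(I)$ matches exactly the role of the set $G$. Once these identifications are made carefully, the two maps are seen to be inverse bijections, which yields the claim.
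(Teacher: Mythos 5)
Your proposal is correct and follows essentially the same route as the paper: the forward map (the set where $d\tilde{\mathtt{s}}/d\mathtt{s}=1$) and the inverse map (integration of $\mathbf{1}_G$ against $d\mathtt{s}$ from the base point $e$) are exactly the paper's constructions, and your verification that they land in the right classes uses the same computations. The only cosmetic difference is that you package the argument as exhibiting two mutually inverse maps, whereas the paper proves injectivity and surjectivity of the single map $\tilde{\mathtt{s}}\mapsto G_{\tilde{\mathtt{s}}}$; the underlying identities are identical.
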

\begin{proof}
	For any $\tilde{\mathtt{s}}\in \SSS_\mathtt{s}(I)$, define
	\begin{equation}\label{EQ2GSX}
		G_{\tilde{\mathtt{s}}}:=\bigg\{x\in I: \frac{d\tilde{\ss}}{d\ss}(x)=1\bigg\}.
	\end{equation}	
Clearly $G_{\tilde{\ss}}$ is defined in sense of $d\ss$-a.e., and for any interval $(c,d)\subset I$, it holds that
\[
	\int_{G_{\tilde{\ss}}\cap (c,d)} d\ss=\int_c^d 1_{G_{\tilde{\ss}}}(x)d\ss(x)=\int_c^d d\tilde{\ss}>0.
\]
That implies $G_{\tilde{\ss}}\in \GGG_\ss(I)$. 

Now we shall prove that the mapping
\[
	\SSS_\ss(I)\rightarrow \GGG_\ss(I),\quad \tilde{\ss}\mapsto G_{\tilde{\ss}}
\]
is a bijective mapping. Firstly, it follows from $G_{\tilde{\ss}}\in \GGG_\ss(I)$ that this mapping is defined well. Secondly, let us prove that it is an injection. Assume that $\ss_1,\ss_2\in \SSS_\ss(I)$ satisfy $G_{\ss_1}=G_{\ss_2}$, $d\ss$-a.e. Then for any $x\in I$, 
\[
	\ss_1(x)=\int_e^x d\ss_1=\int_e^x \frac{d\ss_1}{d\ss}d\ss=\int_e^x 1_{G_{\ss_1}}(y)d\ss(y);
\]
Similarly, we have
\[
	\ss_2(x)=\int_e^x 1_{G_{\ss_2}}(y)d\ss(y).
\]
Hence we can deduce that $\ss_1=\ss_2$. Finally we shall explain that the mapping above is also a surjection. In fact, for any set $G\in \GGG_\ss$, let
\begin{equation}\label{EQ2SXE}
	\tilde{\ss}(x):=\int_e^x 1_G(y)d\ss(y),\quad x\in I. 
\end{equation}
We only need to prove $\tilde{\ss}\in \SSS_\ss(I)$ and $G_{\tilde{\ss}}\in \GGG_\ss(I)$. Indeed, from \eqref{EQ2GSX} we obtain that $\tilde{\ss}$ is strictly increasing. Furthermore, it follows from \eqref{EQ2SXE} that $\tilde{\ss}(e)=0$, $\tilde{\ss}\ll \ss$ and
\[
	\frac{d\tilde{\ss}}{d\ss} =1_G,\quad d\ss\text{-a.e.}
\]
This implies that $\tilde{\ss}\in \SSS_\ss(I)$ and $G_{\tilde{\ss}}\in \GGG_\ss(I)$. That ends the proof.  
\end{proof}

In Proposition~\ref{PRO1} and Lemma~\ref{LM1}, we obtain two equivalent characterizations of all regular subspaces of $\big(\EE^{(\ss,m)},\FF^{(\ss,m)}_0\big)$. For each regular subspace $\big(\EE^{(\tilde{\ss},m)},\FF^{(\tilde{\ss},m)}_0\big)$, the set $G_{\tilde{\ss}}$ in $\GGG_\ss(I)$, which corresponds to the scaling function $\tilde{\ss}$, is called the characteristic set of $\big(\EE^{(\tilde{\ss},m)},\FF^{(\tilde{\ss},m)}_0\big)$. Therefore, we may write down the following equivalent descriptions:
\begin{equation}\label{EQ2ESM}
	(\EE^{(\tilde{\ss},m)},\FF^{(\tilde{\ss},m)}_0)\leftrightharpoons (\EE^{(\tilde{\ss},m)},\FF^{(\tilde{\ss},m)})\leftrightharpoons \tilde{\ss}\leftrightharpoons G_{\tilde{\ss}}. 
\end{equation}
Note that the two Dirichlet forms in \eqref{EQ2ESM} are equal if and only if neither $a$ nor $b$ is $\ss$-regular.

The characteristic set is very important in the research of regular subspaces. For example, when $I=\RRR$, $m$ is the Lebesgue measure on $\RRR$ and $\ss$ is the natural scaling function, $\big(\EE^{(\ss,m)},\FF^{(\ss,m)}_0\big)$ is exactly the associated Dirichlet form of 1-dimensional Brownian motion. In another work of the second author and his co-author \cite{LY14}, they found that if the characteristic set $G$ is open, such as the complement of generalized Cantor set, then the regular subspace and Brownian motion share the same part on $G$. That means their difference concentrates on the boundary of $G$. This fact conduces to a study about the traces of Brownian motion and its regular subspace. We refer more details to \cite{LY14}. In the mean time, we denote a subset of $\GGG_\ss(I)$ by
 \begin{equation}
 	\overset{\circ}{\GGG}_{\ss}(I):=\{G\in \GGG_\ss(I): G~\text{has an open}~d\ss\text{-a.e.~version}\},
 \end{equation}
which will play an important role in what follows.

 \section{Mosco convergence \uppercase\expandafter{\romannumeral1}}\label{SEC3}

In this section, we shall consider the Mosco convergence on regular subspaces of $(\EE^{(\ss,m)},\FF^{(\ss, m)}_0)$. Before presenting the first convergence method, we need to prove a very useful lemma. Let $G\in \GGG_\ss(I)$ be a characteristic set and $F:=G^c$. Its associated scaling function is denoted by $\tilde{\ss}$. The following lemma provides another expression of Dirichlet form $(\EE^{(\tilde{\ss},m)}, \FF^{(\tilde{\ss},m)})$ from the viewpoint of characteristic set, a special case of which was already presented for Brownian motion in \cite{LY14}.
 
 \begin{lemma}\label{Lemma1}
 	It holds that
 	\begin{equation}\label{EQ3GUF}
 		\FF^{(\tilde{\ss},m)}=\bigg\{u\in \FF^{(\ss,m)}: \frac{du}{d\ss}=0,~d\ss\text{-a.e. on }F\bigg\}. 
 	\end{equation}
 \end{lemma}
 \begin{proof}
 	Note that $\FF^{(\tilde{\ss},m)}$ has the expression \eqref{EQ31FSM}. For any $u\in \FF^{(\tilde{\ss},m)}$, since $u\ll \tilde{\ss}$, it follows that $u\ll \ss$ and
 	\[
 		\frac{du}{d\ss}=\frac{du}{d\tilde{\ss}}\cdot \frac{d\tilde{\ss}}{d\ss}=\frac{du}{d\tilde{\ss}}\cdot 1_G,\quad d\ss\text{-a.e.}
 	\]
Thus we have  $du/d\ss=0$, $d\ss$-a.e. on $F$. On the contrary, assume that $u$ is a function in the class of right side of \eqref{EQ3GUF}, we only need to prove $u\ll \tilde{\ss}$ and $du/d\tilde{\ss}\in L^2(I, d\tilde{\ss})$. In fact, for any $x,y\in I$, 
 	\[
 		u(x)-u(y)=\int_y^x \frac{du}{d\ss}d\ss=\int_y^x \frac{du}{d\ss}\cdot 1_G d\ss=\int_y^x \frac{du}{d\ss} d\tilde{\ss}.
 	\]
 Hence $u\ll \tilde{\ss}$ and $du/d\tilde{\ss}=du/d\ss$, $d\tilde{\ss}$-a.e. It follows from $du/d\ss\in L^2(I,d\ss)$ that $du/d\ss\in L^2(I,d\tilde{\ss})$. That implies $u\in \FF^{(\tilde{\ss},m)}$, which completes the proof. 
 \end{proof}
 
 
Now, we assume that $\{G_n:n\geq 1\}$ is a sequence of sets in $\GGG_\ss(I)$. For each $n$, $G_n$ corresponds to the scaling function $\ss_n$.  Set $(\EE^n,\FF^n):=(\EE^{(\ss_n,m)},\FF^{(\ss_n,m)})$. Take another set $G\in \GGG_\ss(I)$, its associated scaling function is $\tilde{\ss}$, and set $(\EE,\FF):=(\EE^{(\tilde{\ss},m)},\FF^{(\tilde{\ss},m)})$. The following theorem asserts that if a sequence of characteristic sets is decreasing to another characteristic set, then the sequence of their associated Dirichlet forms is convergent in sense of Mosco. 

\begin{theorem}\label{THM141}
 If $G_n\downarrow G$, $d\ss$-a.e., then $(\EE^n,\FF^n)$ is convergent to $(\EE,\FF)$ in sense of Mosco. 
\end{theorem}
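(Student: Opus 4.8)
The plan is to translate both Mosco conditions into the characteristic-set language of Lemma~\ref{Lemma1}. Writing $F_n:=G_n^c$ and $F:=G^c$, the hypothesis $G_n\downarrow G$ ($d\ss$-a.e.) is the same as $F_n\uparrow F$, and Lemma~\ref{Lemma1} gives
\[
	\FF^n=\Big\{u\in\FF^{(\ss,m)}:\tfrac{du}{d\ss}=0,\ d\ss\text{-a.e. on }F_n\Big\},\qquad
	\FF=\Big\{u\in\FF^{(\ss,m)}:\tfrac{du}{d\ss}=0,\ d\ss\text{-a.e. on }F\Big\}.
\]
Since $d\ss_n=1_{G_n}d\ss$ and $du/d\ss_n=du/d\ss$ (as in the proof of Lemma~\ref{Lemma1}), together with $du/d\ss=0$ on $F_n$, the energy functional is in every case $\tfrac12\int_I(du/d\ss)^2\,d\ss$; hence $\EE^n(u,u)=\EE^{(\ss,m)}(u,u)$ for $u\in\FF^n$ and $\EE(u,u)=\EE^{(\ss,m)}(u,u)$ for $u\in\FF$. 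Because $F_n\uparrow F$ forces $\FF^1\supset\FF^2\supset\cdots\supset\FF=\bigcap_n\FF^n$ with a common energy functional, the extended forms satisfy $\EE^n(u,u)\le\EE^{n+1}(u,u)\le\EE(u,u)$ for every $u\in L^2(I,m)$. Thus we are dealing with an increasing sequence of closed forms whose common value on the (shrinking) domains is $\EE^{(\ss,m)}$.

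With this reduction, condition~(b) is immediate: given $u\in L^2(I,m)$ I would take the constant recovery sequence $u_n\equiv u$. If $u\in\FF$ then $u\in\FF^n$ for all $n$ and $\EE^n(u,u)=\EE(u,u)$; if $u\notin\FF$ then $\EE(u,u)=\infty$. In either case $u_n\to u$ strongly and $\limsup_n\EE^n(u_n,u_n)\le\EE(u,u)$, which is \eqref{141b}.

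The real content is condition~(a). Let $u_n\rightharpoonup u$ weakly in $L^2(I,m)$; I may assume $L:=\liminf_n\EE^n(u_n,u_n)<\infty$ and pass to a subsequence along which $\EE^n(u_n,u_n)\to L$. By the uniform boundedness principle the weakly convergent sequence $\{u_n\}$ is bounded in $L^2(I,m)$, and by the first paragraph $\EE^{(\ss,m)}(u_n,u_n)=\EE^n(u_n,u_n)$ is bounded; hence $\{u_n\}$ is bounded in the Hilbert space $\big(\FF^{(\ss,m)},\EE_1^{(\ss,m)}\big)$. Reflexivity yields a further subsequence converging weakly in $\FF^{(\ss,m)}$, and since the embedding $\FF^{(\ss,m)}\hookrightarrow L^2(I,m)$ is continuous (indeed $\|v\|_m\le\|v\|_{\EE_1}$), the weak $\FF^{(\ss,m)}$-limit must agree with the weak $L^2$-limit $u$. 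In particular $u\in\FF^{(\ss,m)}$, and weak lower semicontinuity of the nonnegative quadratic form $\EE^{(\ss,m)}$ gives $\EE^{(\ss,m)}(u,u)\le\liminf_n\EE^{(\ss,m)}(u_n,u_n)=L$.

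It remains to show $u\in\FF$, i.e.\ $du/d\ss=0$ $d\ss$-a.e. on $F$, and this is the step I expect to be the main obstacle, since weak limits do not respect pointwise constraints. The device is that the constraint is \emph{linear}. For each fixed $m_0$ put
\[
	W_{m_0}:=\Big\{v\in\FF^{(\ss,m)}:\tfrac{dv}{d\ss}=0,\ d\ss\text{-a.e. on }F_{m_0}\Big\},
\]
which is the kernel of the bounded linear map $v\mapsto 1_{F_{m_0}}\,(dv/d\ss)\in L^2(I,d\ss)$, hence a closed (therefore weakly closed) subspace of $\FF^{(\ss,m)}$. For every $n\ge m_0$ one has $F_n\supset F_{m_0}$, so $u_n\in\FF^n\subset W_{m_0}$; passing to the weak $\FF^{(\ss,m)}$-limit gives $u\in W_{m_0}$, that is $du/d\ss=0$ on $F_{m_0}$. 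Letting $m_0\to\infty$ and using $\bigcup_{m_0}F_{m_0}=F$ ($d\ss$-a.e.) forces $du/d\ss=0$ on $F$, so $u\in\FF$ and $\EE(u,u)=\EE^{(\ss,m)}(u,u)\le L=\liminf_n\EE^n(u_n,u_n)$. This is exactly \eqref{141a}, which together with~(b) establishes the Mosco convergence.
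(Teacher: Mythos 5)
Your proof is correct and follows essentially the same route as the paper's: both rest on Lemma~\ref{Lemma1}, use the constant recovery sequence $u_n\equiv u$ for condition (b), and for condition (a) exploit the nesting (every $u_n$ with $n\geq m_0$ lies in a fixed constraint space) together with $\bigcup_{m_0}F_{m_0}=F$ to conclude $u\in\FF$ and identify the energies. The only difference is one of packaging: where the paper invokes the lower semicontinuity of a fixed closed form (``a constant sequence of the same Dirichlet form is Mosco convergent to itself''), you prove that step by hand, via boundedness and weak compactness in the Hilbert space $\big(\FF^{(\ss,m)},\EE_1^{(\ss,m)}\big)$ and the weak closedness of the kernel subspaces $W_{m_0}$.
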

\begin{proof}
Firstly, we claim that
\begin{equation}\label{EQ3FFF}
	\mathcal{F}\subset \cdots \FF^{n+1}\subset \FF^n\subset \cdots \FF^1 \subset \FF^{(\ss,m)}. 
\end{equation}
Indeed, for $n$ and $n+1$, from $G_{n}\subset G_{n+1}$, we can deduce that $F_{n+1}:=G_{n+1}^c\subset G_n^c:=F^n$.  It follows from Lemma~\ref{Lemma1} that $\FF^{n+1}\subset \FF^n$. Similarly, from $G\subset G_n$, we have $\FF\subset \FF^n$. Clearly, $\FF^1\subset \FF^{(\ss,m)}$. 

Secondly, we shall prove (b) of Definition~\ref{DEF141}. If $u\notin \FF$, then $\mathcal{E}(u,u)=\infty$. Clearly, \eqref{141b} is right. For any $u\in \FF$, let $u_n:=u\in \FF\subset \FF^n$. Obviously, $u_n$ is strongly convergent to $u$. Note that $(\EE^n, \FF^n)$ and $(\mathcal{E},\mathcal{F})$ are both regular subspaces of $(\EE^{(\ss,m)},\FF^{(\ss,m)})$. We have
\[
	\mathcal{E}^n(u_n,u_n)=\EE^{(\ss,m)}(u_n,u_n)=\mathcal{E}(u_n,u_n). 
\]
Particularly, $\EE^n(u_n,u_n)=\mathcal{E}(u,u)$. Hence $\limsup_{n\rightarrow \infty}\EE^n(u_n,u_n)=\mathcal{E}(u,u)$, which implies that (b) is proved.

Finally, we turn to prove (a) of Definition \ref{DEF141}. Assume $u_n$ is weakly convergent to $u$ in $L^2(E,m)$. Without loss of generality, we may assume that $u_n$ belongs to $\FF^n$. Or, $\EE^n(u_n,u_n)=\infty$, which implies that $u_n$ is useless in the left side of \eqref{141a}. Fix an integer $N$, for any $n>N$, it follows from $u_n\in \FF^n\subset \FF^N$ that $\{u_n:n\geq N\}\subset \FF^N$.  In particular, $\EE^n(u_n,u_n)=\EE^N(u_n,u_n)$.  Note that a sequence of the same Dirichlet form is convergent to itself in sense of Mosco. That implies that 
\begin{equation}\label{EQR4ENU}
	\liminf_{n\rightarrow \infty}\EE^n(u_n,u_n)=\liminf_{n\geq N, n\rightarrow \infty}\EE^n(u_n,u_n)=\liminf_{n\geq N, n\rightarrow \infty}\EE^N(u_n,u_n)\geq \EE^N(u,u). 
\end{equation}
If for some integer $N$, $u\notin \FF^N$, then $\liminf_{n\rightarrow \infty}\EE^n(u_n,u_n)\geq \EE^N(u,u)=\infty$. Naturally,
\[
	\liminf_{n\rightarrow \infty}\EE^n(u_n,u_n)\geq \EE(u,u). 
\]
Now, assume that for any $N$, $u\in \FF^N$. From Lemma~\ref{Lemma1}, we know that $u\in \FF^{(\ss,m)}$, and $du/d\ss=0$, $d\ss$-a.e. on $F_N$.  It follows from $G_N\downarrow G$ that $\cup_{N\geq 1}F_N=F$, where $F:=G^c$. Hence we can obtain that  $du/d\ss=0$, $d\ss$-a.e. on $F$. By using Lemma \ref{Lemma1} again, we can deduce that $u\in \FF$. Particularly, since $(\mathcal{E},\FF)$ and $(\EE^N,\FF^N)$ in \eqref{EQR4ENU} are both regular subspaces of $(\EE^{(\ss,m)},\FF^{(\ss,m)})$, it follows that $\EE^N(u,u)=\mathcal{E}(u,u)$. From \eqref{EQR4ENU}, we obtain that
\[
	\liminf_{n\rightarrow \infty} \EE^n(u_n,u_n)\geq \EE(u,u),
\]
which completes the proof. 
\end{proof}

Although $(\EE^n, \FF^n)$ and $(\EE,\FF)$ in Theorem~ \ref{THM141} have the relation \eqref{EQ2ESM} with the corresponding regular subspaces, they are not exactly the regular subspaces. Next, we shall discuss some examples of Mosco convergence of real regular subspaces. 
Particularly, if there is a constant $c\in I$ such that $m\big((a,c)\big)=m\big((c,b)\big)=\infty$ (we use $m(a+)=m(b-)=\infty$ to stand for this property), then any irreducible diffusion on $I$ with speed measure $m$ would not have a regular boundary. Hence, the Dirichlet spaces in \eqref{EQ2FSMU} and \eqref{EQ2ESM} are the same. On the other hand, the speed measure is not essential for the structure of regular subspaces. In \cite{LY15}, we found that after a time change with full quasi support, the structure of regular subspaces maintains.

\begin{corollary}\label{COR31}
	We make the same assumptions as Theorem~\ref{THM141}, i.e. $G_n\downarrow G$, $d\ss$-a.e. If any one of following conditions is satisfied: 
\begin{description}
\item[(1)] $m(a+)=m(b-)=\infty$;
\item[(2)] if there is a constant $c\in I$ such that $d\ss\big(G\cap (a,c)\big)<\infty$~\big(resp. $d\ss\big(G\cap (c,b)\big)<\infty$\big), then there exists an integer $N$ such that $d\ss\big(G_N\cap (a,c)\big)<\infty$~\big(resp. $d\ss\big(G_N\cap (c,b)\big)<\infty$\big);
\end{description}
then $(\EE^{(\ss_n,m)},\FF^{(\ss_n,m)}_0)$ is convergent to $(\EE^{(\tilde{\ss},m)},\FF^{(\tilde{\ss},m)}_0)$ in sense of Mosco. 
\end{corollary}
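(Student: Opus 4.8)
The plan is to regard Corollary~\ref{COR31} as the absorbing-boundary refinement of Theorem~\ref{THM141}. Write $\FF^n_0:=\FF^{(\ss_n,m)}_0$ and $\FF_0:=\FF^{(\tilde\ss,m)}_0$ for the absorbing domains, and recall that on $\FF^n_0$ and on $\FF^n$ the form $\EE^n$ carries the \emph{same} Dirichlet integral, the two domains differing only through the vanishing condition imposed at an $\ss_n$-regular endpoint. Two monotonicity facts steer the whole argument, both coming from $G_n\downarrow G$. Since $G\subset G_n$ we have $d\ss\big(G\cap(a,c)\big)\le d\ss\big(G_n\cap(a,c)\big)$, and as $\tilde\ss(x)=\int_e^x 1_G\,d\ss$ by \eqref{EQ2SXE}, approachability of $a$ for $\ss_n$ forces approachability of $a$ for $\tilde\ss$; since the measure condition $m\big((a,c)\big)<\infty$ does not involve the scaling function, \emph{$a$ being $\ss_n$-regular implies $a$ is $\tilde\ss$-regular}. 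Condition~(2) is exactly the eventual converse: if $a$ is $\tilde\ss$-regular, hence $d\ss\big(G\cap(a,c)\big)<\infty$, then some $G_N$ already satisfies $d\ss\big(G_N\cap(a,c)\big)<\infty$, so $a$ is $\ss_N$-regular and, by $G_n\subset G_N$ for $n\ge N$, $\ss_n$-regular for every $n\ge N$. Under condition~(1), $m(a+)=m(b-)=\infty$ denies regularity of either endpoint for \emph{every} scaling function, so $\FF^n_0=\FF^n$ and $\FF_0=\FF$ and the corollary is immediate from Theorem~\ref{THM141}; thus I may assume condition~(2).

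For part~(b) of Definition~\ref{DEF141} I expect no extra work beyond Theorem~\ref{THM141}. Given $u\in\FF_0$, I would take the constant sequence $u_n:=u$. Since $G\subset G_n$, Lemma~\ref{Lemma1} gives $u\in\FF^n$; and by the first monotonicity fact any endpoint that is $\ss_n$-regular is also $\tilde\ss$-regular, so the vanishing conditions defining $\FF_0$ already force $u$ to vanish there, whence $u\in\FF^n_0$ and $\EE^n(u,u)=\EE(u,u)$. This yields \eqref{141b} with equality, and for $u\notin\FF_0$ the inequality is trivial since $\EE(u,u)=\infty$.

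The genuine content is part~(a). Let $u_n\to u$ weakly with, without loss of generality, $u_n\in\FF^n_0$. Applying Theorem~\ref{THM141} to the non-absorbing forms, and using that the Dirichlet integrals of $\FF^n_0$ and $\FF^n$ coincide, I get $\liminf_n\EE^n(u_n,u_n)\ge\EE^{(\tilde\ss,m)}(u,u)$ with $u$ read against the non-absorbing domain $\FF^{(\tilde\ss,m)}$. If $u\in\FF_0$, the absorbing and non-absorbing integrals of $u$ agree and \eqref{141a} follows at once; the only gap is $u\in\FF^{(\tilde\ss,m)}\setminus\FF_0$, say $u(a)\neq0$ at a $\tilde\ss$-regular endpoint $a$, which I must show is incompatible with $\liminf_n\EE^n(u_n,u_n)<\infty$. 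Suppose instead that this liminf is finite, and pass to a subsequence along which $\EE^n(u_n,u_n)$ converges and is bounded by some $M$. Since $(\EE^n,\FF^n)$ is a regular subspace of $(\EE^{(\ss,m)},\FF^{(\ss,m)})$ we have $\EE^n(u_n,u_n)=\EE^{(\ss,m)}(u_n,u_n)$, which gives the uniform estimate $|u_n(x)-u_n(y)|^2\le 2M\,|\ss(x)-\ss(y)|$; together with the $L^2(m)$-boundedness supplied by weak convergence, an Arzel\`a--Ascoli argument extracts a further subsequence converging locally uniformly on $I$ to the continuous version of $u$. By condition~(2), $a$ is $\ss_n$-regular for all large $n$, so $u_n(a+)=0$ and Cauchy--Schwarz gives $|u_n(x)|^2\le 2M\,d\ss\big(G_n\cap(a,x)\big)\le 2M\,d\ss\big(G_N\cap(a,x)\big)$. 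Letting $n\to\infty$ along the subsequence and then $x\downarrow a$, and using that $d\ss\big(G_N\cap(a,\cdot)\big)$ is a finite measure so $d\ss\big(G_N\cap(a,x)\big)\downarrow0$, I conclude $u(a+)=0$, contradicting $u(a)\neq0$. Hence $u\in\FF_0$ and \eqref{141a} holds; the endpoint $b$ is symmetric.

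I expect the Arzel\`a--Ascoli passage and the boundary-value identification in part~(a) to be the main obstacle. The point is to upgrade weak $L^2$-convergence to locally uniform convergence of the continuous representatives, so that the boundary value of the limit can be read off from the uniform decay estimate near $a$; this is precisely where the eventual $\ss_n$-regularity furnished by condition~(2) and the finiteness $d\ss\big(G_N\cap(a,c)\big)<\infty$ enter. Everything else reduces cleanly to Theorem~\ref{THM141} and the two monotonicity facts.
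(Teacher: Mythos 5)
Your proof is correct, and its overall skeleton coincides with the paper's: condition (1) removes all boundary regularity, so Theorem~\ref{THM141} applies verbatim; condition (2) is read, exactly as in the paper, as saying that a $\tilde{\ss}$-regular endpoint must be $\ss_N$-regular for some $N$ (hence $\ss_n$-regular for all $n\geq N$, since $G_n\subset G_N$), the converse inheritance being automatic from $G\subset G_n$; and your part (b) (constant sequence plus $\FF^{(\tilde{\ss},m)}_0\subset\FF^{(\ss_n,m)}_0$) is also the paper's. The genuine divergence is in the liminf inequality. The paper stays entirely at the level of form theory: it establishes the chain of inclusions \eqref{EQ3FSM} and the identification \eqref{EQ3FSM2} --- the absorbing analogue of Lemma~\ref{Lemma1}, which is where the regularity equivalence furnished by condition (2) enters --- and then re-runs the proof of Theorem~\ref{THM141} with absorbing domains; there, the fact that the weak limit $u$ lies in each $\FF^{(\ss_N,m)}_0$, in particular that it vanishes at $\ss_N$-regular endpoints, comes for free from the weak lower semicontinuity of the fixed closed form $\EE^{(\ss_N,m)}$ on $\FF^{(\ss_N,m)}_0$ (the ``constant sequence is Mosco convergent to itself'' fact already invoked in Theorem~\ref{THM141}). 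You instead use Theorem~\ref{THM141} as a black box for the non-absorbing forms and then prove by hand that the weak limit vanishes at a $\tilde{\ss}$-regular endpoint: the H\"older estimate from the energy bound, an Arzel\`a--Ascoli extraction to upgrade weak $L^2$-convergence to locally uniform convergence along a subsequence, and the decay bound $|u_n(x)|^2\leq 2M\,d\ss\big(G_N\cap(a,x)\big)$. This is more work, and your compactness step is left a little terse (pointwise boundedness follows from the $L^2(m)$-bound, equicontinuity and full support of $m$; the locally uniform limit is identified with $u$ by testing against functions supported in compacts), but every claim in it is sound, and it buys a concrete, quantitative explanation of the mechanism by which condition (2) forces the boundary condition to survive the weak limit --- precisely the mechanism whose failure produces Example~\ref{EXA31}. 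The paper's route is shorter and purely structural; yours is more elementary and self-contained in its treatment of the boundary behaviour.
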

 \begin{proof}
The sufficiency of first condition is clear. We only prove the sufficiency of second one. In fact, it suffices to prove
 \begin{equation}\label{EQ3FSM}
 	\FF^{(\tilde{\ss},m)}_0\subset \cdots \FF^{(\ss_n,m)}_0\subset \cdots \FF^{(\ss_1,m)}_0\subset \FF^{(\ss,m)}_0
 \end{equation}
 and
 \begin{equation}\label{EQ3FSM2}
 	\FF^{(\tilde{\ss},m)}_0=\bigg\{u\in \FF^{(\ss_N,m)}_0: \frac{du}{d\ss}=0,~d\ss\text{-a.e.}\bigg\}.
 \end{equation}
Indeed, from $G_{n+1}\subset G_n$, we have $\ss_{n+1}\ll \ss_n$, and $d\ss_{n+1}/d\ss_n=1$ or $0$, $d\ss_n$-a.e. Then it follows from Proposition~\ref{PRO1} that $\FF^{(\ss_{n+1},m)}_0\subset \FF^{(\ss_n,m)}_0$. Similarly, we can deduce that \eqref{EQ3FSM} is right. On the other hand, the second condition of Corollary~\ref{COR31} means that, $a$ or $b$ is $\tilde{\ss}$-regular, if and only if it is $\ss_N$-regular. That is because, if $a$ is $\ss_N$-regular, then from
 \[
 	\tilde{\ss}\ll \ss_N, \quad \frac{d\tilde{\ss}}{d\ss_N}=1~\text{or}~0,~d\ss_N\text{-a.e.},
 \]
we can deduce that $a$ is also $\tilde{\ss}$-regular; on the contrary, if $a$ is $\tilde{\ss}$-regular, which implies that $m(a+)<\infty$, and there is a constant $c\in I$ such that $d\ss\big(G\cap (a,c)\big)<\infty$, then from the second condition, we may obtain that $a$ is $\ss_N$-regular. Therefore we can complete the proof of \eqref{EQ3FSM2}, which is similar to that of Lemma~\ref{Lemma1}. 
 \end{proof}
  
  In particular, if $a$ and $b$ are both $\ss$-regular boundaries, then $a$ and $b$ are also $\tilde{\ss}$-regular and $\ss_n$-regular. Thus the second condition in Corollary~\ref{COR31} is naturally satisfied. At the end of this section, we shall give another example to show that if two conditions above are not satisfied, then $G_n\downarrow G$, $d\ss$-a.e. may not imply that $(\EE^{(\ss_n,m)},\FF^{(\ss_n,m)}_0)$ is convergent to $(\EE^{(\tilde{\ss},m)},\FF^{(\tilde{\ss},m)}_0)$ in sense of Mosco.

\begin{example}\label{EXA31}
Let $I=\RRR$, $\ss(x)=x$, and assume that $m(\RRR)<\infty$. Further assume that $G$ is the set, which is given by Example~5.2 of \cite{FHY10}. Clearly, $G\in \GGG_\ss(\RRR)$ and $|G|<\infty$, where $|\cdot|$ represents the Lebesgue measure on $\RRR$. Define
	\begin{equation}\label{EQ3GNG}
		G_n:=G \bigcup \bigg(\cup_{k\in \mathbf{Z}}(k-\frac{1}{n},k+\frac{1}{n})\bigg).
	\end{equation}
Let $\tilde{\ss}$ and $\ss_n$ denote the associated scaling functions of $G$ and $G_n$ respectively. Particularly, $\tilde{\ss}(-\infty)>-\infty, \tilde{\ss}(\infty)<\infty$. Note that $m(\RRR)<\infty$, which implies that  $-\infty$ and $\infty$ are both $\tilde{\ss}$-regular boundaries. Hence we can obtain 
\[
	\FF^{(\tilde{\ss},m)}\neq \FF^{(\tilde{\ss},m)}_0. 
\]
On the other hand, one may easily check that $G_n\in \GGG_\ss(\RRR)$ and 
	\[
		G_n\downarrow G, 	\quad \text{a.e.}
		\]
For each $n$, it follows that $\ss_n(-\infty)=-\infty, \ss_n(\infty)=\infty$. Thus $-\infty$ and $\infty$ are not $\ss_n$-regular boundaries, and
\[
	\FF^{(\ss_n,m)}=\FF^{(\ss_n,m)}_0. 
\]
Therefore, it follows from Theorem~\ref{THM141} that $(\EE^{(\ss_n,m)},\FF^{(\ss_n,m)}_0)$ is convergent to $(\EE^{(\tilde{\ss},m)},\FF^{(\tilde{\ss},m)})$ in sense of Mosco. However, $\FF^{(\tilde{\ss},m)}\neq \FF^{(\tilde{\ss},m)}_0$. Then by the uniqueness of Mosco convergence, we know that $(\EE^{(\ss_n,m)},\FF^{(\ss_n,m)}_0)$ cannot converge to $(\EE^{(\tilde{\ss},m)},\FF^{(\tilde{\ss},m)}_0)$ in sense of Mosco. 
\end{example}

\section{Mosco convergence \uppercase\expandafter{\romannumeral2}}\label{SEC4}

In \S\ref{SEC3}, we considered the Mosco convergence for decreasing characteristic sets. In this section, we shall discuss the increasing case. 

We first assert that Mosco convergence is invariant under spatial transforms of Dirichlet forms. More precisely, let
\[
	\{(\EE^n,\FF^n): n\geq 1\}
	\]
be a sequence of Dirichlet forms on $L^2(E,m)$, $(\EE,\FF)$ another Dirichlet form on $L^2(E,m)$.Moreover, $(\EE^n,\FF^n)$ is convergent to $(\EE,\FF)$ in sense of Mosco. Assume that $\hat{E}$ is another measurable space and 
\[
	j:E\rightarrow \hat{E},\quad x\mapsto \hat{x}
\]
is a measurable mapping. Let $\hat{m}:=m\circ j^{-1}$ be the image measure of $m$ with respect to $j$. Then
\[
	j^*: L^2(\hat{E},\hat{m})\rightarrow L^2(E,m),\quad  \hat{f}\mapsto \hat{f}\circ j
\]
is an isometric mapping, and the image space of $j^*$ is a closed subspace of $L^2(E,m)$. Set further 
\[
\begin{aligned}
	&\hat{\FF}:=\big\{\hat{f}\in L^2(\hat{E},\hat{m}): j^*\hat{f}\in \FF\big\},\\
	&\hat{\EE}(\hat{f},\hat{g}):=\EE(j^*\hat{f},j^*\hat{g}),\quad \hat{f},\hat{g}\in \hat{\FF}. 
\end{aligned}\]
If $j^*$ maps $L^2(\hat{E},\hat{m})$ onto $L^2(E,m)$, then $(\hat{\EE},\hat{\FF})$ is a Dirichlet form on $L^2(\hat{E},\hat{m})$. Similarly, we can define the image Dirichlet form $(\hat{\EE}^n,\hat{\FF}^n)$ of $(\EE^n,\FF^n)$ under  $j^*$.  

\begin{lemma}
Assume that $j^*$ is a surjection. On $L^2(\hat{E},\hat{m})$, the Dirichlet form $(\hat{\EE}^n,\hat{\FF}^n)$ is convergent to $(\hat{\EE},\hat{\FF})$ in sense of Mosco as $n\rightarrow \infty$. 
\end{lemma}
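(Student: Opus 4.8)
The plan is to exploit the fact that, under the surjectivity hypothesis, $j^*$ is a unitary isomorphism between $L^2(\hat{E},\hat{m})$ and $L^2(E,m)$: it is isometric by construction, hence injective, and it is assumed onto, so it is bijective with an isometric inverse $(j^*)^{-1}$. Since Mosco convergence is formulated purely in terms of the Hilbert-space notions of weak convergence, strong convergence and the (extended-real-valued) forms, and since a unitary map preserves all three, the convergence of $(\EE^n,\FF^n)$ should transfer verbatim to the image forms.

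First I would record the two transport properties of $j^*$. For weak convergence: if $\hat{u}_n\to\hat{u}$ weakly in $L^2(\hat{E},\hat{m})$, then for any $v\in L^2(E,m)$ surjectivity lets us write $v=j^*\hat{v}$, and the isometry gives $(j^*\hat{u}_n,v)_m=(\hat{u}_n,\hat{v})_{\hat{m}}\to(\hat{u},\hat{v})_{\hat{m}}=(j^*\hat{u},v)_m$, so $j^*\hat{u}_n\to j^*\hat{u}$ weakly in $L^2(E,m)$. For strong convergence the isometry immediately yields $\|j^*\hat{u}_n-j^*\hat{u}\|_m=\|\hat{u}_n-\hat{u}\|_{\hat{m}}$, and the same identity run through $(j^*)^{-1}$ transports strong convergence in the opposite direction. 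I would also record the form identity $\hat{\EE}^n(\hat{f},\hat{f})=\EE^n(j^*\hat{f},j^*\hat{f})$, valid as an equality of extended reals: by definition of $\hat{\FF}^n$ we have $\hat{f}\in\hat{\FF}^n$ if and only if $j^*\hat{f}\in\FF^n$, so the two sides are finite and equal simultaneously and are both $+\infty$ otherwise; the same remark applies to $\hat{\EE}$ and $\EE$.

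To verify condition (a), I would take $\hat{u}_n\to\hat{u}$ weakly, push it forward to the weakly convergent sequence $j^*\hat{u}_n\to j^*\hat{u}$ in $L^2(E,m)$, and apply (a) for $(\EE^n,\FF^n)$ to obtain $\liminf_n\EE^n(j^*\hat{u}_n,j^*\hat{u}_n)\geq\EE(j^*\hat{u},j^*\hat{u})$. The form identity then rewrites this as $\liminf_n\hat{\EE}^n(\hat{u}_n,\hat{u}_n)\geq\hat{\EE}(\hat{u},\hat{u})$. For condition (b), given $\hat{u}$ I would set $u:=j^*\hat{u}$, invoke (b) for $(\EE^n,\FF^n)$ to get a strongly convergent recovery sequence $u_n\to u$ with $\limsup_n\EE^n(u_n,u_n)\leq\EE(u,u)$, and pull it back by $\hat{u}_n:=(j^*)^{-1}u_n$; the isometry makes $\hat{u}_n\to\hat{u}$ strongly, while the form identity gives $\limsup_n\hat{\EE}^n(\hat{u}_n,\hat{u}_n)\leq\hat{\EE}(\hat{u},\hat{u})$.

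I do not expect a genuine obstacle here, as the argument is a direct transport of the definitions along a unitary map. The one point that must be handled with care is precisely where surjectivity is used: it is needed in condition (a) to realise every test function $v\in L^2(E,m)$ as $j^*\hat{v}$, so that weak convergence actually transfers, and in condition (b) to guarantee that the recovery sequence $u_n$ produced in $L^2(E,m)$ lies in the range of $j^*$ and can be pulled back. Without surjectivity, $j^*$ would be merely an isometric embedding onto a proper closed subspace and neither transport would be available.
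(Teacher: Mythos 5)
Your proof is correct, but it takes a genuinely different route from the paper's. The paper does not verify conditions (a) and (b) of Definition~\ref{DEF141} at all: it invokes the equivalence, recalled in \S\ref{SEC1}, between Mosco convergence and strong convergence of the associated semigroups. Writing $(T_t)$, $(T^n_t)$, $(\hat{T}_t)$, $(\hat{T}^n_t)$ for the semigroups of $(\EE,\FF)$, $(\EE^n,\FF^n)$, $(\hat{\EE},\hat{\FF})$, $(\hat{\EE}^n,\hat{\FF}^n)$, it checks the intertwining relation $\hat{T}_t\hat{f}=T_t(j^*\hat{f})\circ j^{-1}$ (equivalently $j^*\hat{T}_t=T_tj^*$) and then transfers the strong convergence $T^n_t(j^*\hat{f})\rightarrow T_t(j^*\hat{f})$ back through the isometry. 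You instead verify the Mosco definition directly, using that $j^*$ is unitary: weak convergence, strong convergence and the extended-real-valued forms are all transported verbatim, and your bookkeeping of where surjectivity enters (realising every test function $v\in L^2(E,m)$ as $j^*\hat{v}$ for condition (a), pulling back the recovery sequence for condition (b)) is exactly right. Comparing the two: the paper's argument is shorter \emph{modulo} the semigroup-equivalence theorem, but its real content is hidden in the ``easily checked'' intertwining, which rests on the same fact as your form identity, namely that the pulled-back form corresponds under unitary conjugation to the original one, hence so do the semigroups; your argument is self-contained at the level of the definition and works for any unitary equivalence of Hilbert spaces, with no probabilistic input. It is also worth noting that the paper's displayed computation literally establishes only convergence of norms, $\|\hat{T}^n_t\hat{f}\|_{\hat{m}}\rightarrow\|\hat{T}_t\hat{f}\|_{\hat{m}}$, whereas the intended estimate is $\|\hat{T}^n_t\hat{f}-\hat{T}_t\hat{f}\|_{\hat{m}}=\|T^n_t(j^*\hat{f})-T_t(j^*\hat{f})\|_m\rightarrow 0$; your direct route avoids this slip entirely.
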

\begin{proof}
Denote the semigroups of $(\EE,\FF)$, $(\EE^n,\FF^n)$, $(\hat{\EE},\hat{\FF})$ and $(\hat{\EE}^n,\hat{\FF}^n)$ by $(T_t)_{t\geq 0}$, $(T^n_t)_{t\geq 0}$, $(\hat{T}_t)_{t\geq 0}$ and $(\hat{T}^n_t)_{t\geq 0}$ respectively. We only need to prove that for any $\hat{f}\in L^2(\hat{E},\hat{m})$ and $t\geq 0$, $\hat{T}^n_t\hat{f}$ converges to $\hat{T}_t\hat{f}$ strongly. In fact, since $j^*$ is surjective, one may easily check that
\[
	\hat{T}_t \hat{f}=T_t(j^*\hat{f})\circ j^{-1}, \quad \hat{T}^n_t \hat{f}=T^n_t(j^*\hat{f})\circ j^{-1}.
\]
Since $(\EE^n,\FF^n)$ is Mosco convergent to $(\EE,\FF)$, it follows that
\[
	||\hat{T}^n_t\hat{f}||_{\hat{m}}^2=\int_{\hat{E}}\bigg(T_t^n\big(j^*\hat{f}\big)\big(j^{-1}(\hat{x})\big)\bigg)^2\hat{m}(d\hat{x})=||T^n_t(j^*\hat{f})||_{m}^2\rightarrow ||\hat{T}_t\hat{f}||^2_{\hat{m}},
\]
which completes the proof. 
\end{proof}

If in addition, $j$ is a homeomorphism, then the Mosco convergences of Dirichlet forms and their transforms under $j$ are exactly equivalent. Note that the irreducible diffusion $X$ on $I$ with scaling function $\ss$  will be transformed to another irreducible diffusion with natural scaling function after spatial transform $\ss$. Thus without loss of generality, we shall always assume that $\ss$ is the natural scaling function on $I$ in this section. Let 
\[
	(\EE,\FF):=(\EE^{(\ss,m)},\FF^{(\ss,m)}_0). 
\]
Take a sequence of characteristic sets $\{G_n\in \overset{\circ}{\GGG}_\ss(I):n\geq 1\}$.  Furthermore, assume that all of them are open. For each $n$, denote the associated scaling function of $G_n$ by $\ss_n$, and set
\[
	(\EE^n,\FF^n):=(\EE^{(\ss_n,m)},\FF^{(\ss_n,m)}_0). 
\]
The following theorem is our main result of this section. 

\begin{theorem}\label{THM41}
If $G_n\uparrow I$, then the Dirichlet form $(\EE^n,\FF^n)$ is convergent to $(\EE,\FF)$ on $L^2(I,m)$ in sense of Mosco as $n\rightarrow \infty$. 
\end{theorem}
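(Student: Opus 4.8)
My plan is to verify conditions (a) and (b) of Definition~\ref{DEF141} directly, after recording the structural facts that make them tractable. Since $G_n\subset I$ we have $\ss_n\ll\ss$ with $d\ss_n/d\ss=1_{G_n}$, so Proposition~\ref{PRO1} gives $(\EE^n,\FF^n)\prec(\EE,\FF)$; in particular $\FF^n\subset\FF$ and
\[
	\EE^n(u,u)=\EE(u,u)=\tfrac12\int_I\Big(\tfrac{du}{d\ss}\Big)^2d\ss,\qquad u\in\FF^n.
\]
Moreover $G_n\uparrow I$ forces $F_n:=G_n^c\downarrow\emptyset$, so by Lemma~\ref{Lemma1} the derivative constraint defining $\FF^n$ weakens with $n$; combining this with the inequalities $\ss_n(a+)\ge\ss_{n+1}(a+)\ge\ss(a+)$ (and their analogues at $b$), which govern how the $\ss_n$-regularity of a boundary can only be lost as $n$ grows, one checks that $\FF^1\subset\FF^2\subset\cdots\subset\FF$. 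Thus I am in the situation of an increasing family of subspaces carrying the common form $\EE$.

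For condition (a), suppose $u_n\to u$ weakly in $L^2(I,m)$. I may assume $u_n\in\FF^n$ and, passing to a subsequence, that $\lim_n\EE^n(u_n,u_n)=:L<\infty$ (otherwise \eqref{141a} is trivial). Then $\{u_n\}$ is bounded in the Hilbert space $(\FF,\EE_1)$, since $\EE_1(u_n,u_n)=\EE(u_n,u_n)+\|u_n\|_m^2$ with both terms bounded. Extracting a further subsequence, $u_n$ converges weakly in $(\FF,\EE_1)$ to some $w\in\FF$; because the embedding $\FF\hookrightarrow L^2(I,m)$ is continuous, this is also a weak $L^2$-limit, whence $w=u$ and in particular $u\in\FF$. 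The nonnegative quadratic form $v\mapsto\EE(v,v)$ is convex and $\EE_1$-continuous, hence weakly lower semicontinuous on $(\FF,\EE_1)$, and so $\EE(u,u)\le\liminf_n\EE(u_n,u_n)=\liminf_n\EE^n(u_n,u_n)$, which is \eqref{141a}.

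For condition (b) I would first reduce to the core $C_c^\infty\circ\ss$ of $(\EE,\FF)$: once a recovery sequence is produced for each $u=\varphi\circ\ss$ with $\varphi\in C_c^\infty(J)$, a standard diagonal argument based on the $\EE_1$-density of the core, together with the strong $L^2$ and energy convergences below, yields \eqref{141b} for all $u\in\FF$ (for $u\notin\FF$ it is trivial). Fix such a $u$; then $du/d\ss=\varphi'\circ\ss$ is supported in a compact interval $[\alpha,\beta]\subset I$ and $u\equiv0$ off $[\alpha,\beta]$. The naive candidate has $du_n/d\ss=(du/d\ss)1_{G_n}$, whose energy increases monotonically to $\EE(u,u)$; the trouble is that the defect $c_n:=\int_\alpha^\beta(du/d\ss)1_{F_n}\,d\ss$ need not vanish for finite $n$, so this candidate fails to return to $0$ at $\beta$ and may leave $L^2(I,m)$ (when $m$ charges a boundary) or violate an $\ss_n$-boundary condition. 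I repair this by routing the defect through $G_n$ on a fixed compact subinterval $K\subset(\alpha,\beta)$, setting
\[
	\frac{du_n}{d\ss}:=\Big(\frac{du}{d\ss}\Big)1_{G_n}+c_n\,\frac{1_{G_n\cap K}}{\int_K 1_{G_n}\,d\ss},
\]
normalised so that $u_n\equiv0$ outside $[\alpha,\beta]$. Since $G_n\in\GGG_\ss(I)$ the denominator is positive, and as $G_n\uparrow I$ it tends to $\int_K d\ss>0$; since $c_n\to0$ by dominated convergence, the correction has vanishing energy and $\EE^n(u_n,u_n)=\EE(u_n,u_n)\to\EE(u,u)$. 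The $u_n$ vanish near $a$ and $b$, hence satisfy every boundary condition and lie in $\FF^n$, while $u_n-u$ is supported in the fixed compact $[\alpha,\beta]$, is uniformly bounded, and tends to $0$ pointwise, so $u_n\to u$ strongly in $L^2(I,m)$, giving \eqref{141b}.

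The hard part is condition (b): promoting energy convergence to strong $L^2(I,m)$ convergence and simultaneously keeping $u_n$ inside $\FF^n$ when a boundary is $\ss_n$-regular for finite $n$ but not $\ss$-regular. Both issues are resolved by the compactly supported correction above, and this is exactly the step where the hypothesis $G_n\uparrow I$ (with each $G_n\in\overset{\circ}{\GGG}_\ss(I)$) enters.
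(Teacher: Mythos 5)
Your proof is correct, but its treatment of the limsup condition (b) is genuinely different from the paper's. For condition (a) the two arguments coincide in substance: both reduce to $u_n\in\FF^n\subset\FF$ and $\EE^n(u_n,u_n)=\EE(u_n,u_n)$, then invoke lower semicontinuity of $\EE$ along weakly $L^2$-convergent sequences; the paper cites the Mosco self-convergence of the fixed form $(\EE,\FF)$, which is exactly the weak-compactness argument in $(\FF,\EE_1)$ that you spell out. For condition (b), the paper argues softly: since each $G_n$ is open and $G_n\uparrow I$, every $u\in C_c^\infty(I)$ has compact support contained in some $G_N$, so $C_c^\infty(I)\subset\cup_{n\geq 1}\FF^n$; hence $\cup_{n\geq 1}\FF^n$ is $\EE_1$-dense in $\FF$, and the recovery sequence for a core function is eventually the function itself. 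You instead build an explicit recovery sequence for each core element by truncating the derivative to $G_n$ and routing the defect $c_n$ back through $G_n\cap K$, then pass to general $u\in\FF$ by a diagonal argument. Your construction is longer but buys real generality: it never uses openness of the $G_n$ --- only $G_n\in\GGG_\ss(I)$, positivity of $\int_K 1_{G_n}d\ss$, and $G_n\uparrow I$ $d\ss$-a.e. enter --- so it proves the statement for arbitrary characteristic sets, whereas the paper's covering argument genuinely needs open versions of the $G_n$. (For this reason your closing remark that openness ``is exactly where the hypothesis enters'' undersells your own argument: your corrector makes that hypothesis unnecessary.) One small cleanup: the chain $\FF^1\subset\FF^2\subset\cdots\subset\FF$ is most cleanly justified by noting $d\ss_n/d\ss_{n+1}=1_{G_n}$, $d\ss_{n+1}$-a.e., so that Proposition~\ref{PRO1} applies between consecutive forms; your monotonicity remark about $\ss_n(a+)$ points at the right phenomenon (regularity of a boundary can only be lost as $n$ grows), but Proposition~\ref{PRO1} packages it without case analysis.
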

\begin{proof}
	Similarly to Theorem~\ref{THM141}, we can obtain that
	\[
		\FF^1\subset\cdots \subset \FF^n\subset \cdots\subset \FF. 
	\]	
Now we shall prove (a) of Definition~\ref{DEF141}. For any sequence $\{u_n:n\geq 1\}$ in $L^2(E,m)$, which is weakly convergent to $u$, we may always assume that $u_n\in\FF^n$. Or, $\EE^n(u_n,u_n)=\infty$. Then $u_n$ is useless in the left side of  \eqref{141a}. It follows that $u_n\in \FF^n\subset \FF$, and
\[
	\EE^n(u_n,u_n)=\EE(u_n,u_n). 
\]
Because of the same reason as that of the proof of Theorem~\ref{THM141}, we have 
\begin{equation}\label{EQR4NAU}
	\liminf_{n\rightarrow \infty} \mathcal{E}(u_n,u_n)\geq \mathcal{E}(u,u).
\end{equation}
Thus $\liminf_{n\rightarrow \infty} \mathcal{E}^n(u_n,u_n)\geq \mathcal{E}(u,u)$, i.e. (a) is proved. 

Finally, we turn to prove (b) of Definition~\ref{DEF141}. We assert that  $\cup_{n\geq 1}\FF^n$ is dense in $\FF$ with the norm $||\cdot ||_{\EE_1}$. Note that $C_c^\infty(G_n)\subset \FF^n$, and $C_c^\infty(I)$ is dense in $\FF$. For any function $u\in C_c^\infty(I)$, since the support of $u$ is compact, and
\[
	\text{supp}[u]\subset I=\cup_{n\geq 1}G_n,
\]
it follows that there is an integer $N$ such that $\text{supp}[u]\subset G_N$, which implies that $u\in C_c^\infty(G_N)\subset \FF^N$. Hence
\[
	C_c^\infty(I)\subset \cup_{n\geq 1} \FF^n. 
\]
Clearly, $\cup_{n\geq 1}\FF^n$ is dense in $\FF$. 

For any function $u\in L^2(E,m)$, if $u\notin \FF$, then \eqref{141b} is naturally satisfied. Now assume that $u\in \FF$. From the above assertion, we may find a sequence of functions $\{u_n:n\geq 1\}$ such that $u_n\in \FF^n$ and $||u_n-u||_{\EE_1}\rightarrow 0$ as $n\rightarrow \infty$. In particular, $u_n\in \FF$ and $\EE^n(u_n,u_n)=\EE(u_n,u_n)$. Therefore,
\[
	\limsup_{n\rightarrow \infty}\EE^n(u_n,u_n)=\lim_{n\rightarrow \infty} \EE(u_n,u_n)=\EE(u,u),
\]
which implies \eqref{141b}. That completes the proof. 
\end{proof}

\section{The instability of global properties under Mosco convergence}\label{SEC5}

The global properties of a Dirichlet form stand for its recurrence, transience, irreducibility, conservativeness and etc. We refer their standard definitions to \S1.6 of \cite{FOT11}.

K. Suzuki and T. Uemura in \cite{SU14} pointed out the following fact: Mosco convergence cannot maintain the stability of global properties of Dirichlet forms. In other words, a sequence of recurrent Dirichlet forms may converge to a transient one in sense of Mosco, and vice versa; a sequence of conservative Dirichlet forms may converge to a non-conservative one in sense of Mosco, and vice versa.  In this section, we shall present some examples in the context of regular subspaces to support their viewpoints in \cite{SU14}. That means we shall give the following examples:
\begin{description}
\item[(1)] a sequence of recurrent Dirichlet forms is convergent to a transient Dirichlet form in sense of Mosco;
\item[(2)] a sequence of transient Dirichlet forms is convergent to a recurrent Dirichlet form in sense of Mosco;
\item[(3)] a sequence of conservative Dirichlet forms converges to a non-conservative Dirichlet form in sense of Mosco;
\item[(4)] a sequence of non-conservative Dirichlet forms is convergent to a conservative Dirichlet form in sense of Mosco.
\end{description}
Before that, we need to point out some facts. The Dirichlet form $(\EE^{(\ss,m)},\FF^{(\ss,m)}_0)$, which is given by \eqref{EQ31FSM}, is transient if and only if $a$ or $b$ is $\ss$-approachable. The following example is about the instability of  recurrence/transience. 

\begin{example}\label{EXA51}
	Let $I=\RRR$, $m$ the Lebesgue measure on $\RRR$ and $\ss(x)=x$. In other words, $(\EE^{(\ss,m)},\FF^{(\ss,m)}_0)$ corresponds to 1-dimensional Brownian motion on $\RRR$. More precisely, 
	\[
		(\EE^{(\ss,m)},\FF^{(\ss,m)}_0)=\big(\frac{1}{2}\mathbf{D},H^1(\RRR)\big),
	\]
where $H^1(\RRR)$ is the 1-dim Sobolev space, and $\mathbf{D}(f,g):=\int_\RRR f'(x)g'(x)dx,~f,g\in H^1(\RRR)$. Clearly, this Dirichlet form is recurrent. 

Similarly to Example~\ref{EXA31}, let $G$ be the set given by Example~5.2 of \cite{FHY10},  and $G_n$ the characteristic set defined by \eqref{EQ3GNG}. 
Denote the associated scaling functions of $G$ and $G_n$ by $\tilde{\ss}$ and $\ss_n$. From Example~\ref{EXA31}, we know that
\[
	\tilde{\ss}(-\infty)>-\infty,\quad \tilde{\ss}(\infty)<\infty, 
\]
whereas for each $n$, 
\[
	\ss_n(-\infty)=-\infty,\quad \ss_n(\infty)=\infty. 
\]
That implies that $(\EE^{(\tilde{\ss},m)},\FF^{(\tilde{\ss},m)}_0)$ is transient, but $(\EE^{(\ss_n,m)},\FF^{(\ss_n,m)}_0)$ is recurrent. Note that $m$ satisfies the first condition of Corollary~\ref{COR31}. It follows that as $n\rightarrow \infty$, a sequence of recurrent Dirichlet forms $\{(\EE^{(\ss_n,m)},\FF^{(\ss_n,m)}_0):n\geq 1\}$ is convergent to a transient Dirichlet form $(\EE^{(\tilde{\ss},m)},\FF^{(\tilde{\ss},m)}_0)$ in sense of Mosco. 

Now we still take $G$ above. Note that $G$ is open. For any integer $n$, define
\begin{equation}\label{EQ5UNG}
	U_n:=G\cup (-n, n). 
\end{equation}
One may easily check that $U_n\in \overset{\circ}{\GGG}_\ss(\RRR)$, $\{U_n:n\geq 1\}$ is an increasing sequence of open sets, and $\cup_{n\geq 1}U_n=\RRR$. Denote the associated regular subspace of $U_n$ by $(\EE^n,\FF^n)$. It follows from Theorem~\ref{THM41} that $(\EE^n, \FF^n)$ is convergent to $\big(\frac{1}{2}\mathbf{D},H^1(\RRR)\big)$ in sense of Mosco. Finally, we assert that for each $n$, $(\EE^n,\FF^n)$ is transient. In fact, since $|G|<\infty$, it follows that $|U_n|<\infty$. Furthermore, its associated scaling function $\ss_n$ satisfies $\ss_n(-\infty)>-\infty,~\ss_n(\infty)<\infty$, which implies that $(\EE^n,\FF^n)$ is transient. 
\end{example}

We refer the definition of approachable boundary in finite time of 1-dimensional diffusion to Example~3.5.7 of \cite{CF12}. In particular, 
 $a$ (resp. $b$) is approachable in finite time, if and only if for some constant $c\in (a,b)$, 
\[
	\int_a^c m\big((x,c)\big)d\ss(x)<\infty,\quad (\text{resp.}~ \int_c^b m\big((c,x)\big)d\ss(x)<\infty).
\]
Apparently, regular boundary is always approachable in finite time. On the other hand, a minimal diffusion is conservative, if and only if neither $a$ nor $b$ is approachable in finite time. At the end of this paper, we shall present an example for the instability of conservativeness under Mosco convergence.  

\begin{example}
	We first set $I=\RRR$ and assume that $m(\RRR)<\infty$. The scaling function $\ss$ is the natural scaling function. Let $\{U_n:n\geq 1\}$ be the sequence \eqref{EQ5UNG} of sets in Example~\ref{EXA51}. Since $(\EE^{(\ss,m)},\FF^{(\ss,m)}_0)$ is recurrent, it is also conservative. We assert that the associated regular subspace of   $U_n$ is not conservative. In fact, since $m(\RRR)<\infty$ and the scaling function $\ss_n$, which corresponds to $U_n$, satisfies $\ss_n(-\infty)>-\infty,~\ss_n(\infty)<\infty$, it follows that $a$ and $b$ are both the approachable boundaries in finite time of $(\EE^n,\FF^n)$. In particular, $(\EE^n,\FF^n)$ is not conservative. Therefore, from Theorem~\ref{THM41}, we obtain that as $n\rightarrow \infty$, the non-conservative Dirichlet form $(\EE^n,\FF^n)$ is convergent to a conservative Dirichlet form $(\EE^{(\ss,m)},\FF^{(\ss,m)}_0)$ in sense of Mosco. 
	
Finally, we still set $I=\RRR$, $\ss$ is the natural scaling function and $m$ will be decided later. Assume that $G$ and $G_n$ are the characteristic sets in Example~\ref{EXA51}, $\tilde{\ss}$ and $\ss_n$ are their associated scaling functions. Set $(\tilde{a},\tilde{b}):=\tilde{\ss}(\RRR)$, then $\tilde{a}>-\infty, \tilde{b}<\infty$. Take a strictly increasing and integral function $F$ on $(\tilde{a},\tilde{b})$ such that
\[
	F(\tilde{a}+)=-\infty,\quad F(\tilde{b}-)=\infty.  
\]
The existence of $F$ is clear. For example, take a constant $0<\alpha<1$,  define $F(x):=1/|b-x|^\alpha$ near $\tilde{b}$ and $F(x):=-1/|x-a|^\alpha$ near $\tilde{a}$. Since $\tilde{\ss}$ is strictly increasing, it follows that $F\circ \tilde{\ss}$ is a strictly increasing function on $\RRR$. Without loss of generality, assume that $F\circ \tilde{\ss}(0)=0$. Furthermore, let $m$ be the Lebesgue-Stieltjes measure with respect to  $F\circ \tilde{\ss}$. In particular, we have
\[
	m\big((-\infty,0)\big)=m\big((0,\infty)\big)=\infty. 
\]
That implies that the first condition of Corollary~\ref{COR31} is satisfied. Thus the Dirichlet form $(\EE^{(\ss_n,m)},\FF^{(\ss_n,m)}_0)$ is convergent to $(\EE^{(\tilde{\ss},m)},\FF^{(\tilde{\ss},m)}_0)$ in sense of Mosco. Note that $(\EE^{(\ss_n,m)},\FF^{(\ss_n,m)}_0)$ is recurrent, hence they are all conservative (see Lemma~1.6.5 of \cite{FOT11}). In  the end, we assert that $(\EE^{(\tilde{\ss},m)},\FF^{(\tilde{\ss},m)}_0)$ is not conservative. It suffices to prove that $-\infty$ or $\infty$ is an approachable boundary of $(\EE^{(\tilde{\ss},m)},\FF^{(\tilde{\ss},m)}_0)$ in finite time. Indeed, since $F$ is integral, we can obtain that
\[
	\int_{-\infty}^0 m\big((x,0)\big)d\tilde{\ss}(x)=-\int_{-\infty}^0 F\circ \tilde{\ss}(x)d\tilde{\ss}(x)=\int_{\tilde{a}}^{\tilde{\ss}(0)}|F(y)|dy<\infty. 
\]
That implies that, as $n\rightarrow \infty$, the conservative Dirichlet form $(\EE^{(\ss_n,m)},\FF^{(\ss_n,m)}_0)$ is convergent to a non-conservative Dirichlet form $(\EE^{(\tilde{\ss},m)},\FF^{(\tilde{\ss},m)}_0)$ in sense of Mosco. 
\end{example}


\bibliography{mybib}




\end{document}